\theoremstyle{plain}
\newtheorem{theorem}{Theorem}[section]
\newtheorem{lemma}[theorem]{Lemma}
\newtheorem{corollary}[theorem]{Corollary}
\theoremstyle{definition}
\theoremstyle{remark}
\DeclareMathOperator*{\sumsum}{\sum\!\sum}
\def\bigmid{\,\big{|}\,}
\def\cond{\mathrm{cond}}
\def\div{\mathop{\mathrm{div}}\nolimits}
\newcommand{\grad}{\nabla}
\def\A{\mathcal{A}}
\def\EE{\mathbb E}
\def\I{\mathcal{I}}
\def\M{\mathcal{M}}
\def\NN{\mathbb{N}}
\def\P{\mathcal{P}}
\def\PP{\mathbb{P}}
\def\Q{\mathcal{Q}}
\def\R{\mathcal{R}}
\def\RR{\mathbb{R}}
\def\X{\mathcal{X}}
\date \today
\title{Markov-bridge representation of ergodic large-deviation principles}
\author{D.R.\ Michiel Renger\thanks{TU M\"unchen, Boltzmannstra{\ss}e 3, 85747 Garching, Germany. Email: \href{mailto:d.r.m.renger@tum.de}{d.r.m.renger@tum.de}}}
\begin{document}
\maketitle
\begin{abstract} We revisit classic ergodic large-deviation principles: for the occupation measure (Donsker-Varadhan), and for the empirical flux. We show that these problems can be embedded into a more general, discrete-time framework. A conditioning and mixing argument then yields alternative expressions for these well-known rate functionals, formulated in terms of Markov bridges.
\end{abstract}

\section{Introduction}

\subsection*{Ergodic large-deviation principles}

Consider a stochastic process $(A(t))_{t\geq0}$ on $\RR^d$ and its ergodic average $\bar A_T:=T^{-1}\int_0^T\!A(t)\,dt$. Under sufficient ergodicity assumptions this average converges as $T\to\infty$ to a deterministic ergodic limit. We revisit the classic problem of deriving the corresponding large-deviation principle~\cite{DemboZeitouni09}, formally the exponential rate of decay: 
\begin{equation}
  \PP(\bar A_T \approx a) \sim e^{-T I(a)}, \qquad\text{ as } T\to\infty.
\label{eq:formal LDP}
\end{equation}
Here $I(a)$ is called the ``rate functional'' and is minimised by the ergodic limit. 

Our work is motivated by the following classic results:
\begin{enumerate}[(A)]
\item\label{it:DVG} $A(t):=\mathds1_{X(t)}$ for an irreducible continuous-time Markov chain on a finite space $\X$ with generator matrix $Q$. In this case $\bar A_T=T^{-1}\int_0^T\mathds1_{X(t)}\,dt\in\P(\X)$ is simply the occupation measure, signifying the proportion of time that the chain $(X(t))_{t\geq0}$ spends in each state $x\in\X$. The classic result due to Donsker, Varadhan and G{\"a}rtner shows that the large-deviation principle~\eqref{eq:formal LDP} holds with rate functional~\cite{DonskerVaradhanI,DonskerVaradhanII,DonskerVaradhanIII,DonskerVaradhanIV,Gartner1977}:
\begin{equation}
  I_\mathrm{DVG}(\rho):=\sup_{u\in\RR^\X, \min_x(u_x)>0} -\sum_{x\in\X}\!\frac{(Q u)_x}{u_x} \rho_x,
\label{eq:DVG}
\end{equation}
and $I_\mathrm{DVG}$ is minimised by the invariant measure $\pi\in\P(\X)$ of the chain $(X(t))_{t\geq0}$.

\item In the same model as in \eqref{it:DVG}, the cumulative empirical flux
\begin{equation}
  W_{xy}(t):=\sum_{s\in\lbrack0,t\rbrack:X(s^-)\neq X(s)}\mathds1_{(X(s^-),X(s))}(x,y)
\label{eq:cumflux}
\end{equation}
counts the number of jumps $x$ to $y$ in time interval $\lbrack0,t\rbrack$. If $A(t):=(\mathds1_{X(t)},\dot W(t))$ \footnote{Strictly speaking the time derivative $\dot W(dt)$ exists as a singular measure in time, but since $A(t)$ appears as integrands only, we allow this minor abuse of notation in the introduction.}, then the large-deviation principle~\eqref{eq:formal LDP} holds with rate functional~\cite{BFG2015a,BFG2015b,BCFG2018}:
\begin{align}
  I_\mathrm{BFG}(\rho,j) &:=\begin{cases} \sumsum_{x,y\in\X} s(j_{xy}\mid \rho_x Q_{xy}),  & \div j =0, j\ll\rho\otimes Q,\\
                               \infty,                &\text{else,}
                 \end{cases},
  \label{eq:BFG}\\
  s(a\mid b) &:=
  \begin{cases}
    a \log\frac{a}{b} - a + b, &a,b>0,\\
    b,                         &a=0, b\geq0,\\
    \infty,                    &\text{otherwise},
  \end{cases}
  \label{eq:s}
\end{align}
and $\I_\mathrm{BFG}$ is minimised by $\rho_x=\pi_x$, $j_{xy}=(\rho\otimes Q)_{xy}:=\pi_xQ_{xy}$.

\item From the contraction principle one immediately recovers two separate large-deviation principles~\eqref{eq:formal LDP} for the occupation measure (as above) and the average empirical flux $T^{-1}\int_0^T\dot W(t)\,dt=T^{-1}W(T)$ respectively:
\begin{align*}
  I_\mathrm{DVG}(\rho)=\inf_{j\in\RR^{\X\times\X}} I_\mathrm{BFG}(\rho,j),
  &&
  I_\mathrm{flux}(j):=\inf_{\rho\in\P(\X)} I_\mathrm{BFG}(\rho,j).
\end{align*}
\end{enumerate}
Comparing these results shows that -- as common in large-deviation theory -- rate functionals are often only implicitly defined, but they may become explicit after including more variables in the description. 

In the current paper we derive a similar large-deviation principle as above with an explicit rate functional, obtained by including \emph{different} variables. Inspired by~\cite{BCFG2018}, the main argument stems from rewriting $T=n T_0$ for a \emph{fixed} $T_0>0$ and $n\in\NN$, so that the ergodic average becomes \footnote{The restriction to discrete values of $T=nT_0$ for $n\in\NN$ does not influence the large deviations, as a straightforward Markov inequality shows that $\bar A_{nT_0}$ and $\bar A_{(n+\alpha)T_0}$ for $\alpha\in(0,1)$ are exponentially equivalent as soon as $\int_0^{\alpha T_0}\!A(t)\,dt$ has finite mean.},
\begin{align}
  \bar A^n:=\bar A_{nT_0}=\frac1n \sum_{m=1}^n \frac1{T_0} \int_{(m-1)T_0}^{mT_0}\!A(t)\,dt, 
\label{eq:ergodic average rewritten1}
\end{align}
and then considering the limit $n\to\infty$.

In all examples above, the variables $T_0^{-1} \int_{(m-1)T_0}^{mT_0}\!A(t)\,dt$ in \eqref{eq:ergodic average rewritten1} become independent after conditioning on the values $(X((m-1)T_0),X(mT_0))$ of another but related Markov process $(X(t))_{t\geq0}$ on a finite set $\X$. Thus we might as well consider the coupled, \emph{discrete-time} process 
\begin{align}
  X_m:=X(mT_0), &&
  A_m:=\frac{1}{T_0} \int_{(m-1)T_0}^{mT_0}\!A(t)\,dt.
\label{eq:discrete-time process}
\end{align}

\subsection*{Result in discrete time}

From here we focus on a \emph{general} homogeneous and irreducible discrete-time Markov chain $(X_m)_{m\in\NN_0}$ on a finite set $\X$ and another discrete-time process $(A_m)_{m\in\NN}$ on $\RR^d$. Define:
\begin{align}
  q^{xy}(da)     &:=\PP(A_1\in da\mid X_0=x, X_1=y), 
  \label{eq:q phi phi*}\\
  \phi^{xy}(\lambda) &:=\log\int\! e^{\lambda\cdot a}\,q^{xy}(da), 
  &
  \phi^{xy*}(a)     &:= \sup_{\lambda\in\RR^d} \big\lbrack \lambda\cdot a - \phi^{xy*}(\lambda) \big\rbrack, \notag\\
  \phi_{\lvert\cdot\rvert}^{xy}(s)&:=\log\int\!e^{s\lvert a\rvert_1}\,q^{xy}(da),
  &
  \phi_{\lvert\cdot\rvert}^{xy*}(r)&:= \sup_{s\in\RR} \big\lbrack rs - \phi_{\lvert\cdot\rvert}^{xy}(s) \big\rbrack.
  \notag
\end{align}
The conditional independence mentioned above can be exploited by studying the two variables:
\begin{align*}
  K^n:=\frac1n \sum_{m=1}^n \mathds1_{(X_{m-1},X_m)}\, A_m ,&&
  \Theta^n:=\frac1n\sum_{m=1}^n \mathds1_{(X_{m-1},X_m)}
\end{align*}
in $(\RR^{d})^{\X\times\X}$ and in the probability measures $\P(\X\times\X)\subset\RR^{\X\times\X}$ respectively. Of course $\sum_{x,y\in\X} K^{n,xy} = \bar A^n:=n^{-1}\sum_{m=1}^n A_m$, and so $(K^n,\Theta^n)$ indeed contains more information than $\bar{A}^n$. 

\begin{theorem} 
Assume that:
\begin{align}
  &\PP\big(A_1\in da_1, \hdots, A_n\in da_n \mid X_0=x_0, \hdots, X_n=x_n\big)
  =\prod_{m=1}^n q_{x_{m-1},x_m}(da_m),
\label{asseq:indep}\\
  &\liminf_{r\to\infty} \frac{\phi_{\lvert\cdot\rvert}^{xy*}(r)}{r}=\infty \text{ for all } x,y\in\X.
\label{asseq:superlinear}
\end{align}
Then the sequence $(K^n,\Theta^n)$ satisfies the large-deviation principle
with good rate functional:
\begin{equation}
  I(k,\theta):=
  \begin{cases}
    \sum_{x,y\in\X} \theta_{xy} \phi^{xy*}\big(\mfrac{k^{xy}}{\theta_{xy}}\big) + \sum_{x,y \in \X} s\big(\theta_{xy} \mid (e^1\# \theta)_x P_{xy}\big),\\
             \hspace{18em} k \ll \theta \,\&\, e^1\#\theta=e^2\#\theta,\\
    \infty,  \hspace{16.7em} \text{otherwise},
  \end{cases}
  \label{eq:formal LDP2}
\end{equation}
where $P_{xy}$ is the transition probability of $(X_m)_{m\in\NN_0}$, marginals are denoted by $(e^1\#\theta)_x:=\sum_{y\in\X}\theta_{xy}$, $(e^2\#\theta)_y:=\sum_{x\in\X}\theta_{xy}$ and $k\ll\theta$ means that $k^{xy}=0$ whenever $\theta_{xy}=0$.
\label{th:main result}
\end{theorem}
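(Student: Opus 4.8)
The plan is to prove the large-deviation principle by the Gärtner--Ellis theorem: compute the scaled cumulant generating function of $(K^n,\Theta^n)$, which turns out to be the logarithm of a Perron--Frobenius eigenvalue, deduce the LDP at speed $n$, and then identify the Legendre transform of that eigenvalue with \eqref{eq:formal LDP2}. (Alternatively one could glue the classical empirical-pair-measure LDP for $(X_m)$ with a conditional Cramér principle for $(A_m)$ given the trajectory, which is perhaps closer to the later Markov-bridge arguments; but the Gärtner--Ellis route seems cleanest.)

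First, for $\xi=(\xi^{xy})\in(\RR^d)^{\X\times\X}$ and $\eta=(\eta_{xy})\in\RR^{\X\times\X}$ one has $n\langle\xi,K^n\rangle+n\langle\eta,\Theta^n\rangle=\sum_{m=1}^n\big(\xi^{X_{m-1}X_m}\!\cdot A_m+\eta_{X_{m-1}X_m}\big)$. Conditioning on the trajectory $(X_0,\dots,X_n)$ and using the conditional independence \eqref{asseq:indep}, the $A$-dependent part of the expectation factorises into $\prod_{m=1}^n e^{\phi^{X_{m-1}X_m}(\xi^{X_{m-1}X_m})}$, so
\[
  \EE\Big[e^{n\langle\xi,K^n\rangle+n\langle\eta,\Theta^n\rangle}\Big]
  =\EE\Big[\textstyle\prod_{m=1}^n \hat P_{X_{m-1}X_m}\Big],
  \qquad \hat P_{xy}:=P_{xy}\,e^{\eta_{xy}+\phi^{xy}(\xi^{xy})}.
\]
Assumption \eqref{asseq:superlinear} is equivalent to $\phi^{xy}_{\lvert\cdot\rvert}(s)<\infty$ for all $s\in\RR$, which forces every $\phi^{xy}$, hence every entry of $\hat P$, to be finite and real-analytic everywhere. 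Since $\hat P\ge 0$ is irreducible with the same zero-pattern as $P$, Perron--Frobenius together with the standard sub-exponential control of the entries of $\hat P^n$ yields, uniformly in the law of $X_0$,
\[
  \Lambda(\xi,\eta):=\lim_{n\to\infty}\tfrac1n\log\EE\big[e^{n\langle\xi,K^n\rangle+n\langle\eta,\Theta^n\rangle}\big]=\log\varrho(\hat P),
\]
with $\varrho(\hat P)>0$ the Perron eigenvalue, and $(\xi,\eta)\mapsto\log\varrho(\hat P)$ smooth because this eigenvalue is simple.

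Since $\Lambda$ is finite and $C^1$ on all of $(\RR^d)^{\X\times\X}\times\RR^{\X\times\X}$ it is lower semicontinuous and (vacuously) essentially smooth, and the finiteness of the $\phi^{xy}_{\lvert\cdot\rvert}$ gives exponential tightness of $K^n$ via a Chebyshev bound on $\tfrac1n\sum_m\lvert A_m\rvert_1$, while $\Theta^n$ ranges over the compact simplex $\P(\X\times\X)$. Hence the Gärtner--Ellis theorem \cite{DemboZeitouni09} gives the LDP for $(K^n,\Theta^n)$ at speed $n$ with good rate function $I=\Lambda^*$. It remains to compute $\Lambda^*(k,\theta)=\sup_{\xi,\eta}\big[\langle\xi,k\rangle+\langle\eta,\theta\rangle-\log\varrho(\hat P)\big]$. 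For fixed $\xi$ the substitution $h_{xy}:=\eta_{xy}+\phi^{xy}(\xi^{xy})$ is a bijection of $\eta$ onto $\RR^{\X\times\X}$ and decouples the supremum,
\[
  \Lambda^*(k,\theta)=\sum_{x,y\in\X}\sup_{\zeta\in\RR^d}\big[\zeta\cdot k^{xy}-\theta_{xy}\,\phi^{xy}(\zeta)\big]
  +\sup_{h\in\RR^{\X\times\X}}\Big[\langle h,\theta\rangle-\log\varrho\big((P_{xy}e^{h_{xy}})_{x,y}\big)\Big].
\]
In the first sum the $(x,y)$-term is $\theta_{xy}\,\phi^{xy*}(k^{xy}/\theta_{xy})$ when $\theta_{xy}>0$, and is $\sup_\zeta\zeta\cdot k^{xy}$ --- i.e.\ $0$ if $k^{xy}=0$ and $+\infty$ otherwise --- when $\theta_{xy}=0$, reproducing the constraint $k\ll\theta$. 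The second supremum is the Legendre transform of the scaled cumulant generating function of $\Theta^n$ alone, which by the classical level-2.5 (Donsker--Varadhan) LDP for a finite irreducible chain \cite{DemboZeitouni09} equals $\sum_{x,y}\theta_{xy}\log\frac{\theta_{xy}}{(e^1\#\theta)_xP_{xy}}$ on $\{\theta\in\P(\X\times\X):e^1\#\theta=e^2\#\theta\}$ and $+\infty$ otherwise; since $\sum_{x,y}\theta_{xy}=\sum_{x,y}(e^1\#\theta)_xP_{xy}=1$, the non-entropic terms in \eqref{eq:s} sum to zero and this equals $\sum_{x,y}s\big(\theta_{xy}\mid(e^1\#\theta)_xP_{xy}\big)$. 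Assembling the two pieces gives \eqref{eq:formal LDP2}, with a final lower-semicontinuity argument covering the cases where some $\theta_{xy}$ vanish.

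The main obstacle I anticipate is the second half of the last step: invoking, or re-deriving, cleanly the Donsker--Varadhan duality between $\log\varrho$ and the level-2.5 relative entropy, with correct bookkeeping of the stationarity constraint $e^1\#\theta=e^2\#\theta$ and of the degenerate edges, and making sure the Perron--Frobenius asymptotics of the first step are genuinely uniform in the initial state even when $\hat P$ is periodic. Everything else is routine once $\Lambda$ has been identified.
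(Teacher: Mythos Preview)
Your proof is correct and takes a genuinely different route from the paper. The paper proceeds exactly along the alternative you mention in passing: it conditions on $\Theta^n=\theta^n$, observes via \eqref{asseq:indep} that $K^n$ then decomposes into independent blocks so that Cram\'er's Theorem yields a conditional LDP with rate $I_\cond(k\mid\theta)=\sum_{x,y}\theta_{xy}\phi^{xy*}(k^{xy}/\theta_{xy})$, invokes the known pair-empirical LDP for $\Theta^n$, and glues the two via a mixing lemma of Biggins after checking uniform exponential tightness of $K^n$ conditional on $\Theta^n=\theta^n\to\theta$. Your G\"artner--Ellis route replaces that gluing by a spectral argument: the scaled cumulant generating function of $(K^n,\Theta^n)$ is $\log\varrho(\hat P)$, globally smooth by simplicity of the Perron root, and your substitution $h_{xy}=\eta_{xy}+\phi^{xy}(\xi^{xy})$ decouples its Legendre transform into the same two pieces. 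The trade-off: your approach is self-contained up to the final Donsker--Varadhan duality (which you rightly flag), whereas the paper's is more modular---each ingredient is elementary, no Perron--Frobenius smoothness is needed, and the argument carries more readily to the infinite-dimensional generalisations hinted at later (where G\"artner--Ellis is notoriously delicate). Both approaches ultimately rely on the pair-empirical LDP for $\Theta^n$ as a known fact, so neither avoids that black box; the paper handles the degenerate case $\theta_{xy}\to 0$ by direct Chernoff bounds using \eqref{asseq:superlinear}, while in your version it is absorbed into the essential-smoothness verification.
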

A few comments on the assumptions are in place. First, $(X_m)_m$ needs to be a homogeneous Markov process, and but $(A_m)_m$ only needs to a hidden Markov process. Indeed, the independence assumption~\eqref{asseq:indep} implies that $(X_m,A_m)_m$ must be a homogeneous Markov process. The other way around, the independence assumption~\eqref{asseq:indep} is easily checked by showing that $(X_m,A_m)_m$ is a homogeneous Markov process where the transition rates do not depend on $A_m$. Second, the superlinear growth assumption~\eqref{asseq:superlinear} is really needed to ensure goodness of the mappings $a\mapsto\theta_{xy}\phi^{xy*}(\theta_{xy}^{-1}a )$ as $\theta_{xy}\to0$. 

Note that since we work in finite dimensions the topology is not an issue. The proof will be fairly straight forward: the first sum in the rate functional~\eqref{eq:formal LDP2} arises from a reweighted Cram{\'e}r's Theorem, and the second one is the known large-deviation rate for the pair-empirical measure $\Theta^n$. Nevertheless the result is interesting and relevant, as one obtains alternative, previously unknown expressions for the two classic cases in one go.


\subsection*{Application to the classic cases in continuous time}

Applying the discrete-time result to the classic cases discussed above yields the following alternative formulations for~\eqref{eq:DVG} and \eqref{eq:BFG}, in terms of inf-convolutions. 

\begin{corollary}[(A) Occupation measure LDP] Fix a $T_0>0$ and let $(X(t))_{t\geq0}$ be an irreducible continuous-time Markov chain on a finite state space $\X$ with transition probability $p_{T_0}(x,y)$. Then the large-deviation rate functional~\eqref{eq:DVG} corresponding to the occupation measure $T^{-1}\int_0^T\!\mathds1_{X(t)}\,dt$ has the alternative formulation:
\begin{align}
  &I_\mathrm{DVG}(\rho)=\inf_{\substack{\theta\in\P(\X\times\X)\\e^1\#\theta=e^2\#\theta}} \, \inf_{\substack{k\in\P(\X)^{\X\times\X}:\\ \sum_{x,y\in\X}k^{xy}=\rho}}  \, I(k,\theta), \notag\\[-1em]
\intertext{where $I$ is given by \eqref{eq:formal LDP2}, $\phi^{xy*}$ by \eqref{eq:q phi phi*}, and}
  &q^{xy}(d\rho)=\PP\Big({\textstyle\frac{1}{T_0}\int_0^{T_0}\!\mathds1_{X(t)}\,dt\in d\rho } \mid X(0)=x, X(T_0)=y\Big).
  \label{eq:q ergodic average}
\end{align}
\label{cor:DVG}
\end{corollary}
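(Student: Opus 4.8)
The plan is to recognise the Donsker--Varadhan--G\"artner setting as a particular instance of the discrete-time framework and to read off the claimed formula by contraction. First I would fix $T_0>0$, put $A(t):=\mathds1_{X(t)}\in\RR^\X$ (so $d=|\X|$), and form the coupled discrete-time chain $X_m:=X(mT_0)$, $A_m:=T_0^{-1}\int_{(m-1)T_0}^{mT_0}\mathds1_{X(t)}\,dt$ exactly as in \eqref{eq:discrete-time process}. Irreducibility of $(X(t))_{t\ge0}$ gives $p_{T_0}(x,y)=(e^{T_0Q})_{xy}>0$ for every $x,y$, so $(X_m)_{m\in\NN_0}$ is a homogeneous, irreducible chain with transition matrix $P_{xy}=p_{T_0}(x,y)$, and the bridge laws $q^{xy}$ of \eqref{eq:q ergodic average} are well-defined probability measures supported on $\P(\X)$.

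Next I would check the two hypotheses of Theorem~\ref{th:main result}. The conditional-independence assumption \eqref{asseq:indep} is nothing but the Markov property of $(X(t))_{t\ge0}$: given the whole skeleton $(X(0),X(T_0),\dots,X(nT_0))=(x_0,\dots,x_n)$, the path fragments over the disjoint intervals $[(m-1)T_0,mT_0]$ are independent, the $m$-th one distributed as the $(x_{m-1},x_m)$-bridge over $[0,T_0]$ (and this law does not depend on $m$ by time-homogeneity); hence $A_1,\dots,A_n$ are conditionally independent with $\Law(A_m\mid\cdot)=q^{x_{m-1}x_m}$, which is precisely \eqref{asseq:indep}. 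The superlinearity \eqref{asseq:superlinear} is immediate from boundedness: every $A_m$ lies in $\P(\X)$, so $|A_m|_1\equiv 1$, $\phi^{xy}_{|\cdot|}(s)=s$, and $\phi^{xy*}_{|\cdot|}(r)=+\infty$ for $r\ne 1$, so $\phi^{xy*}_{|\cdot|}(r)/r\to\infty$ as $r\to\infty$.

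Theorem~\ref{th:main result} now gives an LDP for $(K^n,\Theta^n)$ with good rate functional $I$ of \eqref{eq:formal LDP2}. Since $\bar A_{nT_0}=\sum_{x,y\in\X}K^{n,xy}$ and the map $(k,\theta)\mapsto\sum_{x,y}k^{xy}$ is linear, hence continuous, the contraction principle yields an LDP for $\bar A_{nT_0}$ with good rate $\rho\mapsto\inf\{\,I(k,\theta):\sum_{x,y}k^{xy}=\rho\,\}$; the constraints $\theta\in\P(\X\times\X)$, $e^1\#\theta=e^2\#\theta$ and $k\ll\theta$ need not be written because $I=\infty$ otherwise, so this is exactly the double infimum on the right-hand side of the corollary.

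Finally I would identify this with $I_\mathrm{DVG}$. By the classical Donsker--Varadhan--G\"artner theorem the continuous-time occupation measure $\bar A_T$ obeys the LDP \eqref{eq:formal LDP}--\eqref{eq:DVG} with rate $I_\mathrm{DVG}$; restricting this to the subsequence $T=nT_0$ furnishes an LDP for $\bar A_{nT_0}$ governed by $I_\mathrm{DVG}$. Since a large-deviation principle on a finite-dimensional (in particular Hausdorff) space determines its good rate functional uniquely, the two rate functionals for $\bar A_{nT_0}$ coincide, which is the assertion. There is no serious obstacle here: the proof is a direct application of Theorem~\ref{th:main result} and the contraction principle on top of a classical result, and the only points demanding attention are the verification of \eqref{asseq:indep} above and the bookkeeping of the large-deviation speed when passing between the time horizon $T=nT_0$ and the index $n$.
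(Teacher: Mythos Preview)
Your proposal is correct and follows the same route as the paper: verify the two hypotheses of Theorem~\ref{th:main result} for $A(t)=\mathds1_{X(t)}$ and then apply the contraction principle. The only cosmetic difference is that you check the conditional independence~\eqref{asseq:indep} directly from the Markov property (independent bridge segments given the skeleton), whereas the paper argues via the generator of the coupled process $(X(t),T_0^{-1}\int_0^t\mathds1_{X(s)}\,ds)$, whose jump and drift rates do not depend on the second coordinate.
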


\begin{corollary}[(B) Average flux LDP] Fix a $T_0>0$, let $(X(t))_{t\geq0}$ be a continuous-time Markov chain on a finite state space $\X$ with positive jump rates $Q_{xy}>0$ between all states, and let $W_{xy}(t)$ be the cumulative flux~\eqref{eq:cumflux}. Then the large-deviation rate functional~\eqref{eq:BFG} corresponding to the pair $(T^{-1}\int_0^T\!\mathds1_{X(t)}\,dt,T^{-1}W(T))$ has the alternative formulation:
\begin{align}
  &I_\mathrm{BFG}(\rho,j)=\inf_{\substack{\theta\in\P(\X\times\X)\\e^1\#\theta=e^2\#\theta}}\, \inf_{\substack{k\in(\P(\X)\times\R^{\X\times\X})^{\X\times\X}\\ \sum_{x,y\in\X}k^{xy}=(\rho,j)}}\, 
  I(k,\theta),\notag\\[-1em]
\intertext{where $I$ is given by \eqref{eq:formal LDP2}, $\phi^{xy*}$ by \eqref{eq:q phi phi*}, and}  
  &q^{xy}(d\rho,dj)=\PP\Big({\textstyle\frac{1}{T_0}\int_0^{T_0}\!\mathds1_{X(t)}\,dt\in d\rho, \textstyle\frac{1}{T_0} W(T_0)\in dj} \mid X(0)=x, X(T_0)=y\Big).
\label{eq:q average flux}
\end{align}
\label{cor:BFG}
\end{corollary}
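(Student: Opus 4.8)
The plan is to realise the continuous-time flux problem as a special case of the discrete-time Theorem~\ref{th:main result}, then apply the contraction principle, and finally identify the resulting rate functional with the known $I_\mathrm{BFG}$ via uniqueness of rate functionals. Fix $T_0>0$, put $A(t):=(\mathds1_{X(t)},\dot W(t))$, and define $X_m:=X(mT_0)$ and $A_m:=T_0^{-1}\int_{(m-1)T_0}^{mT_0}A(t)\,dt=\big(T_0^{-1}\int_{(m-1)T_0}^{mT_0}\mathds1_{X(t)}\,dt,\ T_0^{-1}(W(mT_0)-W((m-1)T_0))\big)$. Then $(X_m)_{m\in\NN_0}$ is a homogeneous Markov chain on $\X$ with transition matrix $P_{xy}=p_{T_0}(x,y)$, irreducible (in fact $p_{T_0}(x,y)>0$ for all $x,y$, since $Q_{xy}>0$ for all $x\neq y$), the conditional law of $A_1$ given $X_0=x,X_1=y$ is exactly the kernel $q^{xy}$ of \eqref{eq:q average flux}, and $\sum_{x,y\in\X}K^{n,xy}=\bar A^n=n^{-1}\sum_{m=1}^nA_m=\bar A_{nT_0}=(T^{-1}\int_0^T\mathds1_{X(t)}\,dt,T^{-1}W(T))$ with $T=nT_0$.

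Next I would verify the two hypotheses of Theorem~\ref{th:main result}. Assumption~\eqref{asseq:indep} holds because, by the Markov property of $(X(t))_{t\ge0}$, the block path $(X(t))_{t\in[(m-1)T_0,mT_0]}$ depends on the history up to time $(m-1)T_0$ only through $X_{m-1}$, and $A_m$ is a functional of that block path alone; hence $(X_m,A_m)_m$ is a homogeneous Markov chain whose transition kernel does not depend on the $A$-coordinate, which — as noted after Theorem~\ref{th:main result} — is equivalent to \eqref{asseq:indep} (equivalently: conditioned on the skeleton $X_0,\dots,X_n$ the block paths are independent Markov bridges, so the $A_m$ are conditionally independent with $A_m\sim q^{X_{m-1}X_m}$). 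Assumption~\eqref{asseq:superlinear} holds because the occupation part of $A_1$ is a probability vector, so $\lvert A_1\rvert_1=1+T_0^{-1}N$ with $N$ the number of jumps of $(X(t))$ on $[0,T_0]$; $N$ is stochastically dominated by a Poisson variable (finite state space, bounded total jump rate) and conditioning on $\{X(0)=x,X(T_0)=y\}$ only reweights its law by a factor bounded by $1/p_{T_0}(x,y)<\infty$, so $\phi_{\lvert\cdot\rvert}^{xy}(s)<\infty$ for every $s$, which is equivalent to the superlinear growth of $\phi_{\lvert\cdot\rvert}^{xy*}$. (For Corollary~\ref{cor:DVG} this step is trivial, as there $\lvert A_1\rvert_1\equiv1$.)

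Theorem~\ref{th:main result} then gives the large-deviation principle for $(K^n,\Theta^n)$ with good rate $I$, and since $(k,\theta)\mapsto\sum_{x,y}k^{xy}$ is linear and continuous the contraction principle yields a large-deviation principle for $\bar A_{nT_0}=\sum_{x,y}K^{n,xy}$ with good rate $(\rho,j)\mapsto\inf\{I(k,\theta):\sum_{x,y}k^{xy}=(\rho,j)\}$, which is exactly the double infimum in the statement — the constraints $e^1\#\theta=e^2\#\theta$, $k\ll\theta$ and the support restriction on each $k^{xy}$ being enforced automatically by finiteness of $I$ and of $\phi^{xy*}$. By the Markov-inequality argument of the footnote (finite mean of $\int_0^{\alpha T_0}A(t)\,dt$), $\bar A_{nT_0}$ and $\bar A_T$ for general $T$ are exponentially equivalent, so this is in fact a large-deviation principle for the continuous-time pair $(T^{-1}\int_0^T\mathds1_{X(t)}\,dt,T^{-1}W(T))$. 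Since by \cite{BFG2015a,BFG2015b,BCFG2018} the same pair also satisfies a large-deviation principle with rate $I_\mathrm{BFG}$ of \eqref{eq:BFG}, uniqueness of (good) rate functionals on the finite-dimensional ambient space forces the two expressions to coincide, giving the claimed identity; one should keep track of the scaling parameter here, since Theorem~\ref{th:main result} is phrased with $n$ as speed while $I_\mathrm{BFG}$ refers to the scaling by $T=nT_0$. The hardest part will be the clean verification of \eqref{asseq:indep} through the Markov-bridge decoupling of the blocks; the main nuisances will be this speed bookkeeping and the absence of a closed form for $q^{xy}$ — the latter being exactly why the identification with $I_\mathrm{BFG}$ is routed through uniqueness of rate functionals rather than a direct evaluation of the inf-convolution. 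The remaining ingredients (applicability of Theorem~\ref{th:main result}, the contraction step) are routine.
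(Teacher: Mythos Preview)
Your overall strategy coincides with the paper's: verify the two hypotheses of Theorem~\ref{th:main result} for the discretised process~\eqref{eq:discrete-time process} and then contract along $(k,\theta)\mapsto\sum_{x,y}k^{xy}$. The conditional-independence check~\eqref{asseq:indep} is handled in the same spirit (the paper phrases it via the generator of the coupled process $(X(t),T_0^{-1}\!\int_0^t\mathds1_{X(s)}\,ds,T_0^{-1}W(t))$, whose rates do not depend on the $(\rho,j)$-coordinates). The one genuine difference is the superlinear-growth check~\eqref{asseq:superlinear}. The paper works directly with the Markov bridge: it computes the time-dependent bridge generator $Q^{xy}_{ab}(t)$, notes that these rates diverge as $t\to T_0$ only for jumps into the terminal state $b=y$, and constructs an explicit dominating Poisson-type process by replacing each excursion away from $y$ with an instantaneous out-and-back jump of size $2/T_0$; this yields the concrete lower bound $\phi^{xy*}_{\lvert\cdot\rvert}(r)\ge\tfrac12\,s(r-1\mid 2\bar Q^{xy})$. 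Your argument is more elementary and bypasses the bridge analysis entirely: you bound the \emph{unconditioned} jump count $N$ by a Poisson variable and then observe that conditioning on $\{X(T_0)=y\}$ inflates expectations by at most $1/p_{T_0}(x,y)<\infty$, so all exponential moments of $\lvert A_1\rvert_1=1+T_0^{-1}N$ remain finite under $q^{xy}$, which is exactly finiteness of $\phi^{xy}_{\lvert\cdot\rvert}$ on all of $\RR$ and hence~\eqref{asseq:superlinear}. Your route is shorter and in fact only needs irreducibility (so that $p_{T_0}(x,y)>0$) rather than the paper's stronger hypothesis $Q_{xy}>0$ for all $x\neq y$; the paper's route, in exchange, produces a quantitative bound and exercises the bridge-generator lemma set up just before.
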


\subsection*{Generalisations}

To keep the notation simple we work with homogeneous Markov chains $(X(t))_{t\geq0}$. However, the discrete-time chain~\eqref{eq:discrete-time process} remains homogeneous if the continuous-time chain $(X(t))_{t\geq0}$ has $T_0$-periodic jump rates $Q(t)$, and Corollaries~\ref{cor:DVG} and \ref{cor:BFG} remain true as long as those jump rates are bounded (from above, and below away from zero). In fact, the continuous-time large-deviation principle~\eqref{eq:BFG} was already extended to periodic rates in~\cite{BCFG2018}.

The discrete-time Theorem~\ref{th:main result} can be generalised to infinite dimensions using the Dawson-G{\"a}rtner Theorem~\cite[Th.~4.6.1]{DemboZeitouni09}. More precisely, $\RR^d$ can be replaced by a dual Banach space $\A$ that has a predual, and $\X$ can be generalised to a measurable space, so that $\Theta^n$ becomes a probability measure in $\P(\X)$ and $K^n$ becomes a Banach-valued vector measures in $\M(\X\times\X;\A)$, see~\cite{Dinculeanu2000}. The exponential tightness argument of Lemma~\ref{lem:unif exp tight} is still applicable; thus a priori one obtains exponential tightness in the \emph{vague} topology of $\M(\X\times\X;\A)\times\P(\X)$. This is a real issue, because both Cram{\'e}r's Theorem as well as the pair-empirical measure large deviations are known to fail in infinite-dimensional measure spaces when equipped with the narrow topology.

\section{Proof of the result in discrete time}
\label{sec:finite}

\begin{proof}[Proof of Theorem~\ref{th:main result}] The claim will follow immediately from a mixing argument~\cite[Th.~5]{Biggins2004} after checking the following properties.
\begin{enumerate}
\item The random variable $\Theta^n$ takes values in the compact set $\P(\X\times\X)$ and is thus exponentially tight. In Lemma~\ref{lem:unif exp tight} we show the uniform exponential tightness of $K^n$, conditioned on $\Theta^n=\theta^n$ for arbitrary converging sequences $\P_n(\X\times\X)\ni\theta^n\to\theta\in\P(\X\times\X)$ from the set
\begin{equation}
  \P_n(\X\times\X):= \P(\X\times\X)\cap(n^{-1}\NN)^{\X\times\X}.
\label{eq:Pn}
\end{equation}
Together, this implies that $(K^n,\Theta^n)$ is exponentially tight~\cite[Prop.~6]{Biggins2004}.
\item The random variable $\Theta^n$ satisfies the large-deviation principle in $\P(\X\times\X)$ with good rate functional~\cite[Th.~IV.3]{Hollander08}:
\begin{align*}
  \theta\mapsto
  \begin{cases}
    \sum_{x,y \in \X} s\big(\theta_{xy} \mid (e^1\#\theta)_x P_{xy}\big), &e^1\#\theta=e^2\#\theta,\\
    \infty,   &\text{otherwise}.
  \end{cases}
\end{align*}
\item In Lemma~\ref{prop:cond LDP} we show that for arbitrary converging sequences $\P_n(\X\times\X)\ni\theta^n\to\theta\in\P(\X\times\X)$, the random variables $K^n$ conditioned on $\Theta^n=\theta^n$ satisfies the large-deviation principle with rate functional
\begin{equation}
  I_\cond(k\mid\theta):=
   \begin{cases}
    \sum_{x,y\in\X} \theta_{xy} \phi^{xy*}\big(\mfrac{k^{xy}}{\theta_{xy}}\big), &k \ll \theta,\\
    \infty,                                                                       &\text{otherwise},
  \end{cases}
\label{eq:Cramer RF1}
\end{equation}
where we implicitly set $\theta_{xy}\phi^{xy*}(k^{xy}/\theta_{xy}):=0$ whenever $\theta_{xy}=0$ and $k^{xy}=0$. Here it is essential that we chose $\P_n(\X\times\X)$ so that $\theta^n_{xy}>0$ for all $x,y$.
\item Finally, $I_\cond(k\mid\theta)=\sum_{x,y\in\X} \sup_{\lambda^{xy}\in } \lbrack k^{xy}\cdot \lambda^{xy}-\theta_{xy}\phi^{xy}(\lambda^{xy})\rbrack$ is clearly jointly lower semicontinuous in $k$ and $\theta$.

\end{enumerate}
\end{proof}

In the following, the key will be to observe that both $K^n$ and $\Theta^n$ are invariant under permutations of the indices $m=1,\hdots,n$, so that the independence assumption~\eqref{asseq:indep} yields, for box sets $dk=\bigtimes_{x,y\in\X}dk^{xy}$:
\begin{align}
  \PP(K^n\in dk\mid \Theta^n=\theta^n)
  &= \prod_{x,y\in\X} \PP\big({\textstyle \frac1n\sum_{m=1}^{n\theta^n_{xy}} \tilde A^{xy}_{m}\in dk^{xy} }\big),
\label{eq:Cramer app}
\end{align}
where the new variables $\tilde A^{xy}_{m}$ are independent identically distributed with probability $q^{xy}$.

\begin{lemma} For each pair $x,y\in\X$ the sequence of conditional probabilities $\PP(K^{n,xy} \in da \mid \Theta^n=\theta^n)$ is uniformly exponentially tight, i.e. for each $\eta>0$ there exists an $R_\eta>0$ such that for all convergent sequences
$\P_n(\X\times\X) \ni \theta^n\rightharpoonup\theta\in\P(\X\times\X)$,
\begin{equation*}
  \limsup_{n\to\infty}\frac1n\log\PP\big(\lvert K^{n,xy}\rvert_1 >R_\eta \bigmid \Theta^n=\theta) \leq -\eta.
\end{equation*}
As a consequence, $K^n$ is also uniformly exponentially tight.
\label{lem:unif exp tight}
\end{lemma}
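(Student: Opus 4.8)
The plan is to run a Cram\'er--Chernoff exponential estimate on the conditional law of each coordinate $K^{n,xy}$, the point being that the permutation invariance of $K^n$ and $\Theta^n$ has already reduced this conditional law to a sum of i.i.d.\ variables. Fix $x,y\in\X$ and a sequence $\P_n(\X\times\X)\ni\theta^n\rightharpoonup\theta$; since $\theta^n\in\P_n(\X\times\X)$ we have $n\theta^n_{xy}\in\NN$. By the identity~\eqref{eq:Cramer app}, conditionally on $\{\Theta^n=\theta^n\}$ the coordinate $K^{n,xy}$ is distributed as $\frac1n\sum_{m=1}^{n\theta^n_{xy}}\tilde A^{xy}_m$ with the $\tilde A^{xy}_m$ i.i.d.\ of law $q^{xy}$; the triangle inequality for $\lvert\cdot\rvert_1$ therefore gives, for every $R>0$,
\[
  \PP\big(\lvert K^{n,xy}\rvert_1>R\bigmid\Theta^n=\theta^n\big)\ \le\ \PP\Big(\tfrac1n{\textstyle\sum_{m=1}^{n\theta^n_{xy}}}\lvert\tilde A^{xy}_m\rvert_1>R\Big),
\]
and it suffices to bound the right-hand side uniformly in $n$ and in the sequence $(\theta^n)$.

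Here the superlinearity hypothesis~\eqref{asseq:superlinear} enters: by the standard convex-duality fact that a closed proper convex function is finite on all of $\RR$ precisely when its Legendre conjugate is superlinear, \eqref{asseq:superlinear} forces $\phi_{\lvert\cdot\rvert}^{xy}(s)<\infty$ for every $s\in\RR$ (only the case of some $s_0>0$ is needed). I would then fix any $s_0>0$, apply Markov's inequality to $\exp\big(s_0\sum_{m=1}^{n\theta^n_{xy}}\lvert\tilde A^{xy}_m\rvert_1\big)$, and use independence to get
\[
  \PP\Big(\tfrac1n{\textstyle\sum_{m=1}^{n\theta^n_{xy}}}\lvert\tilde A^{xy}_m\rvert_1>R\Big)\ \le\ e^{-s_0nR}\,e^{\,n\theta^n_{xy}\,\phi_{\lvert\cdot\rvert}^{xy}(s_0)}.
\]
Since $\phi_{\lvert\cdot\rvert}^{xy}(s_0)\ge\phi_{\lvert\cdot\rvert}^{xy}(0)=0$ and $\theta^n_{xy}\le1$, taking $\frac1n\log$ yields the bound $-s_0R+\phi_{\lvert\cdot\rvert}^{xy}(s_0)$, valid for all $n$ and all $\theta^n\in\P_n(\X\times\X)$ at once. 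Given $\eta>0$, the choice $R_\eta:=s_0^{-1}\big(\eta+\phi_{\lvert\cdot\rvert}^{xy}(s_0)\big)$ makes this $\le-\eta$, which is the first assertion; optimising over $s_0>0$ instead recovers the sharp exponent $-\phi_{\lvert\cdot\rvert}^{xy*}(R)$, which by~\eqref{asseq:superlinear} diverges to $-\infty$ as $R\to\infty$.

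For the final sentence, since $\X$ is finite the sets $\{k\in(\RR^d)^{\X\times\X}:\max_{x,y}\lvert k^{xy}\rvert_1\le R\}$ are compact, and a union bound over the finitely many pairs $(x,y)$ together with the elementary fact that the $\frac1n\log$-asymptotics of a finite sum of probabilities is governed by its largest term upgrades the per-coordinate estimate to
\[
  \limsup_{n\to\infty}\tfrac1n\log\PP\big({\textstyle\max_{x,y}}\lvert K^{n,xy}\rvert_1>R\bigmid\Theta^n=\theta^n\big)\ \le\ -\min_{x,y\in\X}\phi_{\lvert\cdot\rvert}^{xy*}(R),
\]
whose right-hand side still tends to $-\infty$ (a minimum of finitely many functions each diverging to $+\infty$). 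I expect no real obstacle; the one point needing care is exactly the step that uses the theorem's hypotheses, namely the finiteness of $\phi_{\lvert\cdot\rvert}^{xy}$ at some positive $s_0$ (without which the Chernoff estimate is vacuous and its exponent does not grow in $R$), and the only bookkeeping subtlety is that $R_\eta$ must be chosen independently of the particular sequence $\theta^n\rightharpoonup\theta$ — which it is, since the estimate used nothing about $\theta^n$ beyond $\theta^n_{xy}\le1$.
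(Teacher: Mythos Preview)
Your argument is correct and follows essentially the same route as the paper: use the i.i.d.\ representation~\eqref{eq:Cramer app}, apply a Chernoff bound to $\sum_m\lvert\tilde A^{xy}_m\rvert_1$, and exploit $\theta^n_{xy}\le1$ together with the superlinearity assumption~\eqref{asseq:superlinear} to obtain a uniform exponential tail. You spell out a few points the paper leaves implicit---the convex-duality step guaranteeing $\phi_{\lvert\cdot\rvert}^{xy}(s_0)<\infty$ for some $s_0>0$, the triangle inequality, and the union bound over pairs for the final sentence---but the core argument is identical.
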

\begin{proof} By \eqref{eq:Cramer app}, using a standard Chernoff bound and then $\theta_{xy}^n\leq1$:
\begin{align}
  \limsup_{n\to\infty}\frac1n\log\PP\big(\lvert K^{n,xy}\rvert_1 \geq R_\eta \mid \Theta^n=\theta\big) &\leq \limsup_{n\to\infty}-\sup_{s\in\RR}\lbrack sR_\eta-\theta^n_{xy}\phi_{\lvert\cdot\rvert}^{xy}(s)\rbrack \label{eq:Chernoff}\\
  &\leq -\phi_{\lvert\cdot\rvert}^{xy*}(R_\eta). \notag
\end{align}
The result then follows from Assumption~\eqref{asseq:superlinear}.
\end{proof}

\begin{lemma} Take an arbitrary sequence $\P_n(\X\times\X) \ni \theta^n\to\theta\in\P(\X\times\X)$. Then the sequence of conditional probabilities $\PP(K^n\in dk\mid \Theta^n=\theta^n)$ satisfies the large-deviation principle with good rate functional $I_\cond(k\mid\theta)$.
\label{prop:cond LDP}
\end{lemma}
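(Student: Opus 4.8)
The plan is to use the conditional independence across index pairs recorded in \eqref{eq:Cramer app} to reduce the statement to a single-pair, reweighted Cram\'er problem, and then to reassemble by tensorisation. Fix $x,y\in\X$. By \eqref{eq:Cramer app}, conditioned on $\Theta^n=\theta^n$ the block $K^{n,xy}$ has the law of $\tfrac1n\sum_{m=1}^{n\theta_{xy}^n}\tilde A_m^{xy}$ with $\tilde A_m^{xy}$ i.i.d.\ of law $q^{xy}$, and since $\theta_{xy}^n\in n^{-1}\NN$ is a genuine integer multiple of $1/n$ its scaled cumulant generating function can be computed exactly:
\[ \Lambda_n^{xy}(\lambda):=\tfrac1n\log\EE\big[e^{n\,\lambda\cdot K^{n,xy}}\bigmid\Theta^n=\theta^n\big]=\theta_{xy}^n\,\phi^{xy}(\lambda)\;\longrightarrow\;\theta_{xy}\,\phi^{xy}(\lambda)=:\Lambda^{xy}(\lambda)\qquad(n\to\infty). \]
Assumption~\eqref{asseq:superlinear} forces $\phi_{\lvert\cdot\rvert}^{xy}(s)<\infty$ for every $s\in\RR$, hence $\phi^{xy}$ -- and therefore the limit $\Lambda^{xy}$ -- is finite on all of $\RR^d$; in particular $\Lambda^{xy}$ is differentiable everywhere and, its effective domain having empty boundary, steep, so it is essentially smooth and lower semicontinuous.

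By the G\"artner--Ellis theorem \cite[Th.~2.3.6]{DemboZeitouni09}, the conditional laws of $K^{n,xy}$ then satisfy the large-deviation principle with rate $(\Lambda^{xy})^*$ (this is the ``reweighted Cram\'er's Theorem'' alluded to in the introduction); the upgrade of the upper bound from compact to closed sets, as well as goodness, follow from the uniform exponential tightness of Lemma~\ref{lem:unif exp tight}. A Legendre computation identifies $(\Lambda^{xy})^*(k^{xy})=\theta_{xy}\,\phi^{xy*}(k^{xy}/\theta_{xy})$ when $\theta_{xy}>0$, while for $\theta_{xy}=0$ one has $\Lambda^{xy}\equiv0$ and $(\Lambda^{xy})^*(k^{xy})=0$ for $k^{xy}=0$ and $=\infty$ otherwise -- precisely the summand in \eqref{eq:Cramer RF1} with its stated conventions. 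Because the blocks $(K^{n,xy})_{x,y\in\X}$ are conditionally independent by \eqref{eq:Cramer app}, the joint conditional law of $K^n$ satisfies the large-deviation principle with the sum of the per-pair rates (tensorisation of large-deviation principles for finitely many independent sequences, \cite[Exercise~4.2.7]{DemboZeitouni09}), i.e.\ with rate $\sum_{x,y\in\X}(\Lambda^{xy})^*(k^{xy})$. This sum is $+\infty$ unless $k^{xy}=0$ at every $(x,y)$ with $\theta_{xy}=0$, i.e.\ unless $k\ll\theta$, and on that set it reduces to $\sum_{x,y\in\X}\theta_{xy}\phi^{xy*}(k^{xy}/\theta_{xy})$; hence it equals $I_\cond(k\mid\theta)$, which is good as a finite sum of good rate functions.

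The main obstacle is the degenerate regime $\theta_{xy}=0$, where the number of summands $n\theta_{xy}^n$ may be bounded or grow sublinearly, so that $K^{n,xy}$ is not an empirical mean but a vanishingly small, atypical perturbation; one must check that its contribution to the rate is $+\infty$ away from the origin. This is exactly where the superlinear-growth hypothesis~\eqref{asseq:superlinear} enters: it makes $\phi^{xy}$ finite everywhere, so the limit $\Lambda^{xy}\equiv0$ is essentially smooth with vanishing gradient, which through G\"artner--Ellis delivers the super-exponential concentration of $K^{n,xy}$ at $0$ and the recession value $+\infty$ of the rate -- equivalently, $\lim_{\theta\downarrow0}\theta\,\phi^{xy*}(k/\theta)$ equals $0$ at $k=0$ and $+\infty$ otherwise because $\Dom\phi^{xy}=\RR^d$. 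The remaining points -- goodness of $k\mapsto\theta_{xy}\phi^{xy*}(k/\theta_{xy})$ for fixed $\theta$, and the reduction from product sets to general open sets in the tensorisation step -- are routine in this finite-dimensional setting.
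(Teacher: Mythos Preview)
Your argument is correct and in fact somewhat cleaner than the paper's. The paper splits into three cases for each pair $(x,y)$: when $\theta_{xy}>0$ it invokes Cram\'er's Theorem for $(n\theta^n_{xy})^{-1}\sum_{m=1}^{n\theta^n_{xy}}\tilde A^{xy}_m$ at speed $n\theta^n_{xy}$ and then rescales; when $\theta_{xy}=0$ and $k^{xy}\neq0$ it uses the Chernoff bound~\eqref{eq:Chernoff} together with~\eqref{asseq:superlinear} to force the rate to $+\infty$; when $\theta_{xy}=0$ and $k^{xy}=0$ it argues indirectly from exponential tightness that the rate must vanish. You instead compute the limiting scaled cumulant generating function $\Lambda^{xy}=\theta_{xy}\phi^{xy}$ and apply G\"artner--Ellis uniformly, which absorbs all three cases at once: the key step---that~\eqref{asseq:superlinear} is equivalent, by convex duality, to $\phi^{xy}_{\lvert\cdot\rvert}$ (hence $\phi^{xy}$) being finite on the whole space---is exactly right, and then essential smoothness is automatic since the domain has empty boundary. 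Your route is shorter and makes the degenerate case $\theta_{xy}=0$ completely transparent (the lower bound at $k^{xy}=0$ drops out of G\"artner--Ellis rather than a subsequence argument); the paper's route, in exchange, stays at the level of Cram\'er plus Chernoff and pinpoints more visibly that~\eqref{asseq:superlinear} is invoked precisely along vanishing weights. One small redundancy: your appeal to Lemma~\ref{lem:unif exp tight} for goodness and the closed-set upper bound is not needed, since $0\in\mathrm{int}\,\Dom\Lambda^{xy}=\RR^d$ already yields both within G\"artner--Ellis.
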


\begin{proof} By the independence~\eqref{eq:Cramer app} we can show the large-deviation principle for each pair $x,y$ separately. We distinguish between three cases.

Let $x,y\in\X$ be a pair for which $\theta_{xy}>0$. By Cram{\'e}r's Theorem~\cite[Cor.~6.1.6]{DemboZeitouni09} 
\footnote{In fact, that particular version of Cramér's Theorem requires an additional condition to derive goodness of the rate functional, but this follows from our Assumption~\eqref{asseq:superlinear} since $\phi^{xy*}(a)\geq\phi_{\lvert\cdot\rvert}^{xy*}(\lvert a \rvert_1)$.}
the sequence $(n\theta^n_{xy})^{-1}\sum_{m=1}^{n\theta_{xy}^n}\tilde A^{xy}_{m}$ satisfies the large-deviation principle \emph{with speed} $n\theta_{xy}^n\to\infty$ and good rate functional $\phi^{xy*}(k^{xy})$. The sequence $n^{-1}\sum_{m=1}^{n\theta^n_{xy}} \tilde A^{xy}_{m}$ thus satisfies the large-deviation principle \emph{with speed} $n$ and good rate functional $\theta_{xy} \phi^{xy*}\big(\mfrac{k^{xy}}{\theta_{xy}}\big)$.

Now let $x,y\in X$ be a pair for which $\theta_{xy}=0$ but $k^{xy}\neq0$. 
For arbitrary $\epsilon>0$ the Chernoff bound~\eqref{eq:Chernoff}, $0<\theta_{xy}^n\to\theta_{xy}=0$ and Assumption~\eqref{asseq:superlinear} together yield:
\begin{align}
  \limsup_{n\to\infty}\frac1n\log\PP\big(\lvert K^{n,xy}\rvert_1 \geq \epsilon) \leq 
-\liminf_{n\to\infty} \theta^n_{xy}\phi_{\lvert\cdot\rvert}^{xy*}(\tfrac{\epsilon}{\theta^n_{xy}})=-\infty.
\label{eq:one xy exp tight}
\end{align}
This shows that $I_\cond(k|\theta)=\infty$ whenever $k\ll\theta$ is violated.

Finally, consider a pair for which $0<\theta^n_{xy}\to\theta_{xy}=0$ and $k^{xy}=0$. Since $K^{n,xy}$ is exponentially tight by Lemma~\ref{lem:unif exp tight}, the corresponding rate functional for that particular pair $x,y$ must have infimum zero. We conclude that $\theta_{xy}\phi^{xy*}(k^{xy}/\theta_{xy})=0$.
\end{proof}


\section{Proof for the classical cases in continuous time}

Now $(X(t))_{t\geq0}$ is a given \emph{continuous-time} Markov chain on finite state space $\X$, and to keep notation tidy let us restrict to homogeneous chains. In order to prove Corollaries~\eqref{cor:DVG} and \eqref{cor:BFG}, the assumptions of Theorem~\ref{th:main result} need to be checked for the two specific settings; the results then follow immediately from the Contraction Principle~\cite[Th.~4.2.1]{DemboZeitouni09}.

The second result requires more work than the first one. Unfortunately it is generally difficult to obtain an explicit expression for $q^{xy}$. However, the following bridge representation will be helpful.  As before, the transition probability and generator matrices of the unconditioned chain are denoted by $P(t)$ and $Q$.
\begin{lemma} Conditioned on $X(0)=x, X(T_0)=y$, the process $(X(t))_{t\in\lbrack0,T_0\rbrack}$ is an inhomogeneous Markov chain with transition probabilities, for $0< s\leq t< T_0$,
\begin{align*}
  P_{ab}^{xy}(s,t)&:=\PP\big(X(t)=b\,\mid\, X(0)=x, X(s)=a, X(T_0)=y\big)\\
    &=\frac{P_{ab}(t-s)P_{by}(T_0-t)}{P_{ay}(T_0-s)},
\intertext{and the corresponding time-dependent generator matrix is given by:}
  Q_{ab}^{xy}(t)&= \frac{\big(P(t-s)Q\big)_{ab} P_{by}(T_0-t)-\mathds1_{ab}\big(Q P(T_0-t)\big)_{by}}{P_{ay}(T_0-t)}.
\end{align*}
\end{lemma}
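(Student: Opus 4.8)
The plan is to compute the conditional transition probabilities directly from the definition of conditional probability, exploiting the Markov property of the unconditioned chain. First I would fix $0 < s \leq t < T_0$ and write, by definition of conditional probability,
\[
  P^{xy}_{ab}(s,t) = \frac{\PP\big(X(0)=x,\ X(s)=a,\ X(t)=b,\ X(T_0)=y\big)}{\PP\big(X(0)=x,\ X(s)=a,\ X(T_0)=y\big)}.
\]
Both numerator and denominator are then expanded using the Markov property: conditionally on the values at an ordered set of times, the chain's increments over disjoint intervals are independent, so the joint probability factorises into a product of one-step transition probabilities times the initial distribution $\PP(X(0)=x)$. Concretely, the numerator equals $\PP(X(0)=x)\,P_{xa}(s)\,P_{ab}(t-s)\,P_{by}(T_0-t)$ and the denominator equals $\PP(X(0)=x)\,P_{xa}(s)\,P_{ay}(T_0-s)$; the common factors $\PP(X(0)=x)\,P_{xa}(s)$ cancel, leaving the claimed formula. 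One should note in passing that the expression is well defined precisely because the chain is irreducible, so $P_{ay}(T_0-s)>0$ for every $a,y$ and every $s<T_0$.

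For the time-dependent generator, the idea is to differentiate the transition kernel: the generator of an inhomogeneous Markov chain is recovered as $Q^{xy}_{ab}(t) = \partial_\tau\big|_{\tau=t} P^{xy}_{ab}(s,\tau)$ evaluated in the limit $s\to t$, or equivalently $Q^{xy}(t) = \lim_{h\downarrow 0} h^{-1}\big(P^{xy}(t,t+h) - \Id\big)$. Plugging the formula for $P^{xy}_{ab}(s,t)$ into this limit, the denominator $P_{ay}(T_0-t)$ plays the role of a fixed normaliser at the base point, while in the numerator one differentiates the product $P_{ab}(t-s)P_{by}(T_0-t)$ in $t$. Using the Kolmogorov equations $\partial_\tau P(\tau) = P(\tau)Q = Q P(\tau)$, the $\tau$-derivative of $P_{ab}(\tau - s)$ at $\tau = t$ contributes $(P(t-s)Q)_{ab}$ and the $\tau$-derivative of $P_{by}(T_0-\tau)$ contributes $-(QP(T_0-t))_{by}$; collecting these two terms over the common denominator $P_{ay}(T_0-t)$ (after taking $s\to t$ so that $P(t-s)\to\Id$ where needed, and carefully tracking which factor is differentiated) yields the stated formula, with the Kronecker $\mathds1_{ab}$ arising from the diagonal term where the derivative hits the factor independent of $b$.

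The main obstacle is purely bookkeeping: one must be careful about the order in which the two limits ($s\to t$ inside the kernel, and $h\downarrow 0$ in the difference quotient) are taken, and about which of the two factors $P(t-s)$ and $P(T_0-t)$ is being differentiated when forming the generator — this is what produces the asymmetric-looking two-term numerator and the Kronecker delta. A clean way to avoid confusion is to first simplify using $s\to t$, reducing $P^{xy}_{ab}(t,t+h)$ to $P_{ab}(h)P_{by}(T_0-t-h)/P_{ay}(T_0-t)$, expand $P_{ab}(h) = \mathds1_{ab} + hQ_{ab} + o(h)$ and $P_{by}(T_0-t-h) = P_{by}(T_0-t) - h(QP(T_0-t))_{by} + o(h)$, multiply out, subtract $\mathds1_{ab}$, divide by $h$, and let $h\downarrow 0$; the $O(1)$ terms cancel and the $O(h)$ terms reassemble into exactly the asserted $Q^{xy}_{ab}(t)$. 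No deeper difficulty is expected beyond this careful expansion and verification that all denominators are strictly positive.
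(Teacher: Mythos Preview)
Your proposal is correct and follows essentially the same route as the paper: the transition probability is obtained by factorising the joint law via the Markov property of the unconditioned chain, and the generator is extracted from the transition kernel using the forward and backward Kolmogorov equations (your small-$h$ expansion is exactly this). The only ingredient you do not mention explicitly is the Markovianity of the conditioned process itself; the paper handles this by a one-line appeal to Doob's $h$-transform, whereas in your approach it would fall out by checking that the expression you derive for $\PP(X(t)=b\mid X(0)=x,X(s)=a,X(T_0)=y)$ remains unchanged if further past values $X(s_1)=a_1,\ldots$ with $s_i<s$ are added to the conditioning.
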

The Markovianity follows from Doob's h-transform~\cite{fitzsimmons1992markovian}; the transition probability follows trivially from the Markov property of the unconditioned chain, and the generator is derived from the transition probability using the forward and backward Kolmogorov equations. Because of the Markov property of the new, conditioned chain the transition probabilities $P_{ab}^{xy}(s,t)$ do not depend on $x$, but we shall keep the $x$ in the superindex for consistency of notation.

\paragraph{(A) Occupation measure LDP.} 
In this setting $A(t):=\mathds1_{X(t)}$ in $\RR^d:=\RR^\X$, and the discrete-time process $(X_m,A_m)_m$ is defined by \eqref{eq:discrete-time process}. Let $q,\phi,\phi^*$ be given by \eqref{eq:q ergodic average} and \eqref{eq:q phi phi*}.
\begin{proof}[Proof of Corollary~\ref{cor:DVG}] The coupled process $(X(t),{T_0}^{-1}\int_0^t\mathds1_{X(s)}\,ds)_{t\geq0}$ is also Markovian, with generator
\begin{align*}
  (\tilde\Q f)(x,\rho)=\sum_{y\in \X} Q_{xy}\lbrack f(y,\rho)-f(x,\rho)\rbrack + \tfrac1{T_0}\mathds1_x\cdot\grad_\rho f(x,\rho).
\end{align*}
Therefore $(X_m,A_m)_m$ is Markovian, and since the jump and drift rates in the generator do not depend on $\rho$, the conditional independence~\eqref{asseq:indep} holds.

For the growth condition~\eqref{asseq:superlinear}: $\tilde A^{xy}_{m} \in \P(\X)$ almost surely, implying that $\phi_{\lvert\cdot\rvert}^{xy*}(r)=\infty$ whenever $r\notin\lbrack0,1\rbrack$.
\end{proof}

\paragraph{(B) Average flux LDP.}
Recall that $W(t)$ is the cumulative empirical flux~\eqref{eq:cumflux} and $A(t):=(\mathds1_{X(t)},\dot W(t))$, which lies in $\RR^d:=\RR^\X\times\RR^{\X\times\X}$. Define the discrete-time process $(X_m,A_m)_m$ by \eqref{eq:discrete-time process}, and $q,\phi,\phi^*$ by \eqref{eq:q average flux} and \eqref{eq:q phi phi*}.
\begin{proof}[Proof of Corollary~\ref{cor:BFG}] 
Similar to the argument above, the coupled process\break $(X(t),T_0^{-1}\int_0^t\mathds1_{X(s)}\,ds,T_0^{-1}W(t))_{t\geq0}$ is Markovian with generator
\begin{align*}
  (\tilde\Q f)(x,\rho,j)=\sum_{y\in \X} Q_{xy}\lbrack f(y,\rho,j+\tfrac1T\mathds1_{xy})-f(x,\rho,j)\rbrack + \tfrac1T\mathds1_x\cdot\grad_\rho f(x,\rho,j),
\end{align*}
and since the jump and drift rates do not depend on $(\rho,j)$, the conditional independence~\eqref{asseq:indep} holds.

To check the growth condition~\eqref{asseq:superlinear} for each pair $x,y\in\X$, we use $\lvert T_0^{-1}\int_0^{T_0}\!\mathds1_{X(t)}\,dt\rvert_1=1$ and replace the conditional process $(\lvert T_0^{-1}W(t)\rvert_1)_{t\in\lbrack0,T_0\rbrack}$ by a simpler, real-valued process $(U(t))_{t\in\lbrack0,T_0\rbrack}$ with higher jump rates. To this aim, note that for any pair $a\neq b\in\X$:
\begin{align*}
  \lim_{t\to 0} Q_{ab}^{xy}(t)
  =
  \frac{Q_{ab}P_{by}(T_0)}{P_{ay}(T_0)}
  &&
  \lim_{t\to T_0} Q_{ab}^{xy}(t)
  =
  \begin{cases}
    \frac{Q_{ab}Q_{by}}{Q_{ay}}, &a,b\neq y,\\
    Q_{yb}P_{by}(T_0), &a=y,\\
    \infty, &b=y,\\
  \end{cases}
\end{align*}
so that by continuity $\bar{Q}_{ab}^{xy}:=\sup_{t\in\lbrack0,T_0\rbrack} Q_{ab}^{xy}(t)<\infty$ for $b\neq y$ (we assumed that all $Q_{ab}>0$). The process $(U(t))_{t\in\lbrack0,T_0\rbrack}$ with $U(0)=0$ makes independent jumps $+T_0^{-1}$ with Poisson rates $\bar{Q}_{ab}^{xy}$ for $a\neq b, a,b\neq y$, and jumps $+2T_0^{-1}$ with rate $\bar{Q}_{yb}^{xy}$ for $b\neq y$; the factor $2$ represents an \emph{instantaneous} jump back to $y$. Then, setting $\bar{Q}^{xy}:=\sum_{\substack{a,b\in\X,\\a\neq b, a,b\neq y}} \bar{Q}^{xy}_{ab} + \sum_{\substack{b\in\X,\\b\neq y}}\bar{Q}^{xy}_{yb}$,
\begin{align*}
  \phi^{xy}_{\lvert\cdot\rvert}(s)\leq\log\EE e^{s(1+U(T_0))}\leq s+\bar{Q}^{xy} T_0(e^{2s/T_0}-1). 
\end{align*}
From this one obtains the claimed superlinear growth:
\begin{align*}
  \phi^{xy*}_{\lvert\cdot\rvert}(r) \geq \frac12 s(r-1 \, \mid\, 2\bar{Q}^{xy}).
\end{align*}

\end{proof}

\bibliographystyle{alpha}	
\bibliography{lib}

\end{document}